\newtheorem{lemma}{Lemma}[section]
\newtheorem{thm}[lemma]{Theorem}
\newtheorem{prop}[lemma]{Proposition}
\theoremstyle{definition}
\newtheorem{defn}[lemma]{Definition}
\theoremstyle{definition}
\newcommand\norm{\bBigg@{0.8}}
 \newcommand{\indnorm}[2][flex]{\csname #1l\endcsname\|#2%
                                 \csname #1r\endcsname\|\mathclose{}}
                                  \newcommand{\indnorml}[4][flex]{\csname #1l\endcsname\|#2%
                                 \csname #1r\endcsname\|_{#3}^{#4}\mathclose{}}
\newcommand{\sv}[2][flex]{\indnorm[#1]{#2}}
\DeclareMathOperator{\tnorm}{norm}
\newcommand{\nln}[2][flex]{\indnorm[#1]{#2}^{\tnorm}_1}
\newcommand{\N}{\ensuremath {\mathbb{N}}}
\newcommand{\R} {\ensuremath {\mathbb{R}}}
\renewcommand{\rho}{\varrho}
\def\phi{\varphi}
\DeclareMathOperator{\map}{map}
\DeclareMathOperator{\symm}{\textup{symm}}
\DeclareMathOperator{\sgn}{sgn}
\newcommand{\ifsv}[2][norm]{\!\csname #1l\endcsname\bracevert\!#2\!%
                            \csname #1r\endcsname\bracevert\!}
\newcommand{\ifsvlf}[2][norm]{\!\csname #1l\endcsname\bracevert\!#2\!%
                            \csname #1r\endcsname\bracevert\!_{\lf}}
\def\longrightarrow{\rightarrow}
\begin{document}

\title{Simplicial volume via normalised cycles}

\author[]{Clara L\"{o}h}
\address{Fakult\"{a}t f\"{u}r Mathematik, Universit\"{a}t Regensburg, Regensburg, Germany}
\email{clara.loeh@ur.de}

\author[]{Marco Moraschini}
\address{Fakult\"{a}t f\"{u}r Mathematik, Universit\"{a}t Regensburg, Regensburg, Germany}
\email{marco.moraschini@ur.de}

\thanks{}

\keywords{simplicial volume, semi-norms on homology, s-modules}
\subjclass[2020]{55N10, 57N65, 18N50, 18G35, 18G90}
\date{\today.\ \copyright{\ C.~L\"oh, M.~Moraschini 2020}.
  This work was supported by the CRC~1085 \emph{Higher Invariants}
  (Universit\"at Regensburg, funded by the DFG)}

\begin{abstract}
  We show that the Connes-Consani semi-norm on singular homology
  with real coefficients, defined via s-modules, coincides with
  the ordinary $\ell^1$-semi-norm on singular homology in all
  dimensions.
\end{abstract}

\maketitle

\section{Introduction}

Connes and Consani introduced a semi-norm on singular homology via
s-modules and established that this semi-norm is equivalent to
the $\ell^1$-semi-norm defined by Gromov~\cite{connesconsani}. Moreover,
they proved that their semi-norm is equal to the $\ell^1$-semi-norm
in the case of surfaces~\cite[Theorem~1.4]{connesconsani}, using
a delicate construction specific to surfaces. 
In this note, we show that the two semi-norms agree in \emph{all}
dimensions, thereby confirming and extending a conjecture of Connes
and Consani~\cite[p.~4]{connesconsani}:

\begin{thm}\label{mainthm}
  Let $X$ be a topological space, let $n \in \N$, let $\alpha \in
  H_n(X;\R)$, and let $\lambda \in \R_{> 0}$. Then $\|\alpha\|_1 <
  \lambda$ if and only if $\alpha$ lies in the image of the canonical
  map~$H_n(X; \| H\R \|_\lambda) \longrightarrow H_n(X;\R)$.
\end{thm}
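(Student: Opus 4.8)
The plan is to translate the abstract s-module statement into a concrete comparison with norm-controlled singular chains, and then to prove both implications at the chain level.

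The first and central task is to produce an explicit chain-level model for $H_n(X;\|H\R\|_\lambda)$. Here $\|H\R\|_\lambda$ should be the s-module encoding the radius-$\lambda$ ball in $\R$, and I expect that unwinding the definition of homology with s-module coefficients---that is, the relevant bar-type resolution or derived tensor product---expresses $H_n(X;\|H\R\|_\lambda)$ in terms of singular cycles $z=\sum_i a_i\sigma_i$ subject to the constraint $|z|_1 = \sum_i|a_i|<\lambda$, where the canonical map to $H_n(X;\R)$ is induced by forgetting the norm constraint. The nontrivial feature to address is that the norm-constrained chains do \emph{not} form a subcomplex of the singular chain complex in the naive sense: applying $\partial$ can enlarge the $\ell^1$-norm by a factor of $n+1$, so I must check that the s-module formalism handles the boundary relation correctly and that the resulting homology is computed exactly by norm-$<\lambda$ cycles modulo the appropriate norm-controlled boundaries, rather than by something strictly larger or smaller.

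Granting this model, the forward implication is immediate. If $\|\alpha\|_1<\lambda$, then by definition of the $\ell^1$-semi-norm as an infimum there is a singular cycle $z$ representing $\alpha$ with $|z|_1<\lambda$. This $z$ is a norm-constrained cycle, hence defines a class in $H_n(X;\|H\R\|_\lambda)$, and its image under the canonical map is $\alpha$ by construction.

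For the converse, suppose $\alpha$ is the image of some $\beta\in H_n(X;\|H\R\|_\lambda)$. Under the chain-level model, $\beta$ is represented by a norm-constrained cycle $z$ with $|z|_1<\lambda$; since the canonical map is induced by the inclusion, $z$ also represents $\alpha$ in $H_n(X;\R)$, so $\|\alpha\|_1\le|z|_1<\lambda$. The main obstacle, as indicated, is establishing the chain-level model---in particular reconciling the abstract s-module homology with the norm-constrained complex despite the failure of the naive subcomplex property, and arranging the constraint to be the \emph{open} ball so that the strict inequalities in the statement match on both sides. Once this dictionary is in place, both directions of the equivalence follow formally.
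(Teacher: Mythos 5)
There is a genuine gap, and it sits exactly where you placed your ``first and central task.'' The chain-level model you expect---that $H_n(X;\|H\R\|_\lambda)$ is computed by arbitrary singular cycles $z$ with $\sv{z}_1<\lambda$---is not what the s-module formalism produces. The Connes--Consani construction is built on the singular \emph{simplicial set} and its associated normalised complex, and their Theorem~3.5 (which the paper quotes) identifies the image of the canonical map $\rho_{n,\lambda}$ with the set of classes $\alpha$ satisfying $\nln{\alpha}<\lambda$, where $\nln{\args}$ is the infimum of $\sv{c}_1$ over \emph{normalised} cycles $c$ representing $\alpha$, i.e.\ cycles lying in $NC_n(X;\R)=\bigcap_{j=0}^{n-1}\ker\partial_j$. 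With that (correct) model, your converse direction still works, but your forward implication breaks: from $\|\alpha\|_1<\lambda$ you obtain a cycle of norm less than $\lambda$, but not a \emph{normalised} one, and the a priori comparison only gives $\nln{\alpha}\leq\max(1,2^{n-1})\cdot\|\alpha\|_1$, which is useless for a strict bound by $\lambda$ once $n\geq 2$.

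Closing that gap is the actual content of the paper: one must show $\|\alpha\|_1=\nln{\alpha}$ (Proposition~\ref{mainprop}). The paper does this by symmetrisation: averaging a singular simplex over all signed vertex permutations yields a chain map $\symm_\bullet$ that is chain homotopic to the identity, does not increase the $\ell^1$-norm, and---this is the key computation---sends every cycle to a normalised cycle, because $\partial_j\circ\symm_n=(-1)^j\cdot\partial_0\circ\symm_n$ forces all faces of a symmetrised cycle to vanish. So a cycle of norm less than $\lambda$ representing $\alpha$ can be replaced by a normalised cycle of norm still less than $\lambda$, and only then does Connes--Consani's Theorem~3.5 deliver the forward implication. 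Your proposal defers this entire issue to ``checking that the s-module formalism handles the boundary relation correctly,'' but no amount of unwinding the s-module definitions will make the normalisation constraint disappear; a genuinely new chain-level construction (such as the symmetrisation map) is required.
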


In particular, the simplicial volume of closed manifolds can also be
expressed in terms of homology of s-modules.

As explained by Connes and Consani, in order to show
Theorem~\ref{mainthm} it suffices to prove that the
$\ell^1$-se\-mi-norm on singular homology can be computed via
\emph{normalised} singular cycles (see Section~\ref{subsec:thmfromprop}):

\begin{prop}\label{mainprop}
  Let $X$ be a topological space and let $n \in \N$. Then, for all~$\alpha \in H_n(X;\R)$,
  we have
  \[ \| \alpha\|_1 = \nln{\alpha}.
  \]
\end{prop}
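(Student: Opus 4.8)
The plan is to prove the two inequalities separately, identifying the normalised semi-norm $\nln{\alpha}$ with the quotient $\ell^1$-semi-norm on the homology of the normalised chain complex $\bar C_*(X;\R) = C_*(X;\R)/D_*(X;\R)$, where $D_*(X;\R)$ is the subcomplex generated by the degenerate singular simplices (it is a subcomplex because the two identity faces $d_j s_j = d_{j+1} s_j$ cancel in $\partial$, and the rest are again degenerate), and $\bar C_*$ carries the quotient $\ell^1$-norm---equivalently, the $\ell^1$-norm counting only the nondegenerate simplices of a representative. Recall the classical fact that the projection $\pi \colon C_*(X;\R) \to \bar C_*(X;\R)$ induces an isomorphism on homology; under this identification $\nln{\alpha}$ is the infimum of $\|\bar c\|_1$ over all cycles $\bar c \in \bar C_n$ with $[\bar c] = \alpha$. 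Since $\pi$ merely deletes degenerate simplices it is norm non-increasing, so for every cycle $z$ representing $\alpha$ we have $\|\pi(z)\|_1 \le \|z\|_1$; taking the infimum over $z$ gives the easy inequality $\nln{\alpha} \le \|\alpha\|_1$.

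For the reverse inequality I would fix $\varepsilon > 0$ and a normalised cycle $\bar c$ with $[\bar c] = \alpha$ and $\|\bar c\|_1 \le \nln{\alpha} + \varepsilon$, and lift it to a genuine cycle. Choosing the representative $c$ supported on nondegenerate simplices, the chain-map property of $\pi$ forces $\partial c$ to be a cycle lying in $D_{n-1}(X;\R)$. Because the degenerate subcomplex is acyclic, I can write $\partial c = \partial e$ with $e \in D_n(X;\R)$, so that $z := c - e$ is a genuine cycle with $\pi(z) = \bar c$ and hence $[z] = \alpha$; as $c$ and $e$ have disjoint supports in the simplex basis, $\|z\|_1 = \|\bar c\|_1 + \|e\|_1$. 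Thus everything reduces to filling the degenerate defect $\partial c$ by a degenerate chain $e$ whose $\ell^1$-norm is negligible---concretely, to producing for the given class near-optimal normalised cycles whose nondegenerate lift has asymptotically vanishing boundary defect. I would attempt to realise this through an explicit contraction of $D_*(X;\R)$ assembled from the degeneracy operators $s_j$ (which send simplices to simplices and are therefore isometric), eliminating degeneracies from the top down so that each elementary correction cancels against the term it removes.

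The hard part will be precisely the norm bookkeeping in this last step. The standard contracting homotopy for $D_*(X;\R)$, being a finite alternating sum of operators of the form $d_i s_j$, is only \emph{bounded}: it yields $\|e\|_1 \le C_n \|c\|_1$ for some dimensional constant $C_n$, which reproves the Connes--Consani equivalence of the two semi-norms but not their equality. To upgrade boundedness to the sharp estimate $\|e\|_1 \to 0$ I expect to need a genuinely quantitative, iterative device---for instance repeatedly applying a single elementary degeneracy-removal and showing that the successive defects decrease in norm, or a dilution argument that spreads a fixed defect over arbitrarily many simplices so that its filling cost tends to zero---leaving the nondegenerate part, of norm $\le \nln{\alpha} + \varepsilon$, essentially untouched in the limit. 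An attractive alternative that side-steps chain-level fillings is to dualise: by the duality principle the two semi-norms equal the supremum of the Kronecker pairing with $\alpha$ over the unit balls of, respectively, singular and normalised bounded cohomology, and extension-by-zero embeds normalised bounded cochains \emph{isometrically} into all bounded cochains; the theorem then follows once one shows this embedding induces an isometry on bounded cohomology. In either formulation the crux---and the one place where the surface-specific construction of Connes and Consani must be replaced by something uniform in the dimension---is exactly this passage from equivalence to equality of the norms.
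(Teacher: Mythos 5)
There are two problems here. First, you have misread what ``normalised'' means in this paper: $NC_n(X;\R)$ is the Moore \emph{subcomplex} $\bigcap_{j=0}^{n-1}\ker(\partial_j)$ of $C_n(X;\R)$, carrying the restriction of the $\ell^1$-norm, not the quotient $C_*(X;\R)/D_*(X;\R)$ by degenerate simplices with the quotient norm. A normalised cycle is a genuine singular cycle all of whose faces vanish (in positive degree a single nondegenerate simplex is never normalised, since $\partial_0\sigma=\sigma\circ\iota_0^n\neq 0$), so normalised cycles form a \emph{subset} of all cycles and the trivial inequality is $\|\alpha\|_1\le\nln{\alpha}$ --- the opposite of the one you call easy. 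Your ``easy direction'' (projection deletes degenerate simplices, hence is norm non-increasing) therefore concerns a different semi-norm, and your ``hard direction'' (lifting a cycle from the degenerate quotient and filling the defect) is aimed at the inequality that, for the actual definition, comes for free.

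Second, even within your own framework you do not close the argument: you correctly observe that the contracting homotopy of $D_*(X;\R)$ only yields a dimensional constant $C_n$, i.e.\ the already-known Connes--Consani \emph{equivalence} of the semi-norms, and you leave the upgrade to equality as an unproved ``crux'' with several speculative strategies (iterated degeneracy removal, dilution, duality with bounded cohomology). The paper's proof of the nontrivial inequality $\nln{\alpha}\le\|\alpha\|_1$ is a single explicit construction that your proposal does not consider: the signed symmetrisation operator
\[
\symm_n(\sigma)=\frac{1}{(n+1)!}\sum_{\pi\in\Sigma_{n+1}}\sgn(\pi)\cdot\sigma\circ\Delta(\pi),
\]
which is a chain map chain homotopic to the identity, satisfies $\sv{\symm_n(c)}_1\le\sv{c}_1$, and --- this is the key point --- sends every cycle to a normalised cycle, because the identity $\partial_j\circ\symm_n=(-1)^j\cdot\partial_0\circ\symm_n$ forces all faces of $\symm_n(c)$ to vanish once $\partial(\symm_n(c))=0$. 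Applying this to an arbitrary cycle $c$ representing $\alpha$ gives $\nln{\alpha}\le\sv{c}_1$ directly, with no filling or limiting argument needed.
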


In Section~\ref{sec:bg}, we recall basic definitions and notation.
The proof of Proposition~\ref{mainprop} is given in
Section~\ref{sec:proof}, based on a symmetrisation
construction.

\section{The (normalised) $\ell^1$-semi-norm}\label{sec:bg}

\subsection{The singular chain complex}

Let $n \in \N$ and let $\Delta^n$ be the standard $n$-simplex.
For~$j \in \, \{0, \cdots, n\}$, we denote by 
$\iota_j^n \colon \Delta^{n-1} \longrightarrow \Delta^n$ 
the affine inclusion of the $j$-th facet of $\Delta^n$. 

Given a topological space $X$, we consider the singular simplicial
set~$S(X)$: For~$n \in \, \mathbb{N}$, we have
$S_n(X) \coloneqq \map(\Delta^n,X)$ and for~$j \in
\{0,\dots, n\}$, the face maps~$\partial_j \colon
S_n(X) \rightarrow S_{n-1}(X)$ are given
by
$$\partial_j(\sigma) \coloneqq \sigma \circ \iota_j^n$$
for all~$\sigma \in \map(\Delta^n,X)$.
The singular chain complex~$C_\bullet(X;\R)$ with
real coefficients is the free $\R$-chain complex associated
with~$S(X)$.

Furthermore, we have the \emph{Moore normalisation~$NC_\bullet(X;\R)$}
of~$C_\bullet(X;\R)$, given by the submodules 
$$
NC_n(X; \R) \coloneqq \bigcap_{j = 0}^{n-1} \ker (\partial_j) \subseteq C_n(X; \R)
$$
and the boundary maps
$
d \coloneqq \partial_n \colon NC_n(X; \R) \longrightarrow NC_{n-1}(X; \R).
$

\begin{defn}[normalised chain]
  A singular chain~$c \in C_n(X; \R)$ is \emph{normalised} if it lies
  in the submodule~$NC_n(X; \R)$.
\end{defn}

\subsection{The $\ell^1$-semi-norms}

We briefly recall Gromov's $\ell^1$-semi-norm on singular
homology~\cite{Grom82}: The $\ell^1$-norm~$\|\cdot\|_1$ on~$C_n(X;\R)$
associated with the basis~$S_n(X)$ induces a semi-norm on~$H_n(X;\R)$,
the \emph{$\ell^1$-semi-norm}, which we will also denote by~$\|\cdot\|_1$.

Following Connes and Consani~\cite{connesconsani}, one can also endow $H_n(X; \R)$ with
the semi-norm induced by the $\ell^1$-norm on the normalised complex $NC_\bullet(X; \R)$:
For~$\alpha \in H_n(X;\R)$ one sets
$$
  \nln{\alpha} \coloneqq
  \inf \bigl\{\sv{c}_1
       \bigm| \text{$c  \in C_n(X; \R)$ is a normalised cycle representing~$\alpha$}
       \bigr\}.
$$
       
Connes and Consani prove that the two semi-norms are
equivalent~\cite[Lemma~3.4]{connesconsani}, namely, for every~$\alpha
\in H_n(X; \R)$, we have
$$
\sv{\alpha}_1 \leq \nln{\alpha} \leq \max(1,2^{n-1}) \cdot \sv{\alpha}_1.
$$
Proposition~\ref{mainprop} states that they are in fact equal.

\subsection{Deriving Theorem~\ref{mainthm} from Proposition~\ref{mainprop}}\label{subsec:thmfromprop}

Connes and Consani introduce a filtration of the s-module~$H\R$ by a
family~$(\| H\R\|_\lambda)_{\lambda \in \R_{>0}}$ of sub-s-modules
and, for topological spaces~$X$, associated singular homology
objects~$(H_n(X;\|H\R\|_\lambda)_{\lambda \in
  \R_{>0}}$~\cite{connesconsani}. Moreover, these come with canonical
maps
\[ \rho_{n,\lambda} \colon H_n(X;\|H\R\|_\lambda) \longrightarrow H_n(X;\R)
\]
to the singular homology
of~$X$~\cite[Section~3.4]{connesconsani}. This filtration defines a
semi-norm on~$H_n(X;\R)$ that is equivalent to
$\|\cdot\|_1$~\cite[Corollary~3.6]{connesconsani}. More precisely, the image
of~$\rho_{n,\lambda}$ coincides with the set of elements~$\alpha
\in H_n(X;\R)$ with~$\nln \alpha <
\lambda$~\cite[Theorem~3.5]{connesconsani}. Therefore,
Theorem~\ref{mainthm} is a direct consequence of
Proposition~\ref{mainprop}.

\section{Proof of Proposition~\ref{mainprop}}\label{sec:proof}

\subsection{Symmetrisation of chains}

We recall the \emph{symmetrisation map} on singular chains, which is
given by averaging singular simplices over all vertex-permutations of
the standard simplex: In the following, let $X$ be a topological space
and $n \in \N$.  Let $\Sigma_{n+1}$ denote the symmetric group
on~$\{0,\dots, n\}$ and $\sgn \colon \Sigma_{n+1} \longrightarrow
\{\pm1\}$ the sign function. 
For a map~$\pi \colon \{0,\dots, k\} \longrightarrow
\{0,\dots, n\}$, we write
$\Delta(\pi) \coloneqq \bigl[\pi(0),\dots, \pi(k)\bigr] \colon \Delta^k \longrightarrow \Delta^n
$ 
for the affine map that extends the map~$\pi$ on the vertices.

\begin{defn}[symmetrisation map]
The \emph{symmetrisation map}
\[
\symm_n \colon C_n(X; \mathbb{R}) \longrightarrow C_n(X; \mathbb{R}) 
\]
is the $\R$-linear map defined on each singular $n$-simplex~$\sigma$ as
\[
\symm_n(\sigma) \coloneqq \frac{1}{(n+1)!} \sum_{\pi \in \, \Sigma_{n+1}}  \sgn(\pi) \cdot
  \sigma \circ \Delta(\pi).
\]
\end{defn}

\begin{lemma}[\protect{\cite[Lemma~2.6]{fujiwaramanning}}]\label{lem:symch}
  The symmetrisation map~$\symm_\bullet$ is a chain
  map~$C_\bullet(X;\R) \longrightarrow C_\bullet(X;\R)$ that is chain
  homotopic to the identity. Moreover, for all~$c \in C_n(X;\R)$, we
  have
  \[
  \sv[big]{\symm_n(c)}_1 \leq \sv{c}_1.
  \]
\end{lemma}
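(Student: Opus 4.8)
The plan is to verify the three assertions separately: the norm estimate, the chain-map property, and the chain homotopy.

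The norm inequality is the routine part. For a single singular simplex $\sigma$, each summand $\sigma \circ \Delta(\pi)$ is again a singular $n$-simplex, so $\sv[big]{\sigma \circ \Delta(\pi)}_1 = 1$; since $\lvert \sgn(\pi)\rvert = 1$ and $\lvert\Sigma_{n+1}\rvert = (n+1)!$, the triangle inequality yields $\sv[big]{\symm_n(\sigma)}_1 \le \tfrac{1}{(n+1)!}\sum_{\pi \in \Sigma_{n+1}} 1 = 1 = \sv{\sigma}_1$. For an arbitrary chain $c = \sum_\sigma a_\sigma \cdot \sigma$, linearity of $\symm_n$ and the triangle inequality then give $\sv[big]{\symm_n(c)}_1 \le \sum_\sigma \lvert a_\sigma\rvert = \sv{c}_1$.

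For the chain-map property I would compute $\partial \circ \symm_n$ and $\symm_{n-1}\circ\partial$ on a singular simplex $\sigma$ and match coefficients. Writing $\delta_j \colon \{0,\dots,n-1\}\longrightarrow\{0,\dots,n\}$ for the order-preserving injection omitting $j$, the identity $\Delta(\pi)\circ\iota_j^n = \Delta(\pi\circ\delta_j)$ turns both expressions into $\R$-linear combinations of affine simplices $\sigma\circ\Delta(\mu)$ indexed by injections $\mu\colon\{0,\dots,n-1\}\longrightarrow\{0,\dots,n\}$. The heart of the argument is the sign bookkeeping. On the side $\partial\circ\symm_n$, a fixed $\mu$ (with omitted value $m$) arises from exactly $n+1$ pairs $(\pi,j)$, one for each insertion position $j$ of $m$; shifting $m$ across $j$ slots shows that $\sgn(\pi)\cdot(-1)^j$ takes a common value $\varepsilon_\mu\in\{\pm1\}$ independent of $j$, so the coefficient of $\sigma\circ\Delta(\mu)$ is $\tfrac{1}{n!}\,\varepsilon_\mu$. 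On the side $\symm_{n-1}\circ\partial$, the same $\mu$ arises from the unique pair $(i,\tau)$ with $i = m$ and $\tau = \delta_m^{-1}\circ\mu \in \Sigma_n$, contributing $\tfrac{1}{n!}\,(-1)^m\,\sgn(\tau)$. Since $\delta_m^{-1}$ is monotone and hence inversion-preserving, one checks $\varepsilon_\mu = (-1)^m\,\sgn(\tau)$, so the two coefficients coincide. I expect this sign-cancellation to be the main obstacle, as it is the only genuinely non-formal step.

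Finally, for the chain homotopy to the identity I would invoke the method of acyclic models rather than exhibit an explicit homotopy. The singular chain functor $C_\bullet(\args;\R)$ is free on the models $\{\Delta^q\}_{q\in\N}$, with basis the identity simplices, and it is acyclic in positive degrees on these models because standard simplices are contractible. Both $\symm_\bullet$ and the identity are natural endomorphisms of this functor, and they agree in degree $0$: the group $\Sigma_1$ is trivial, so $\symm_0 = \mathrm{id}$, and in particular both maps are augmentation-preserving. The method of acyclic models therefore produces a natural chain homotopy $\symm_\bullet \simeq \mathrm{id}$, which completes the proof.
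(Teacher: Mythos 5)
Your proposal is correct. Note, however, that the paper does not prove this lemma at all: it is imported verbatim as a citation of Fujiwara--Manning, so there is no internal argument to compare against, and what you have written is a self-contained proof of a black-boxed ingredient. All three parts check out. The norm estimate is exactly as routine as you say. For the chain-map identity, your indexing by injections $\mu\colon\{0,\dots,n-1\}\to\{0,\dots,n\}$ is the right bookkeeping, and the only step you leave implicit is the final sign identity $\varepsilon_\mu = (-1)^m\sgn(\tau)$: taking the insertion position $j=n$, the permutation $\pi$ is the sequence $\mu(0),\dots,\mu(n-1),m$, whose inversions are those of $\tau=\delta_m^{-1}\circ\mu$ (by monotonicity of $\delta_m^{-1}$, as you note) plus the $n-m$ inversions created by the values of $\mu$ exceeding $m$, so $\sgn(\pi)=(-1)^{n-m}\sgn(\tau)$ and $\varepsilon_\mu=\sgn(\pi)(-1)^n=(-1)^m\sgn(\tau)$ as claimed. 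The acyclic-models argument for the homotopy is also valid: $\symm_\bullet$ is natural, agrees with the identity in degree $0$ since $\Sigma_1$ is trivial, and $C_\bullet(\args;\R)$ is free on the models $\Delta^q$ and acyclic (in augmented, positive degrees) on them, so the standard uniqueness-up-to-natural-homotopy statement applies. An alternative, more explicit route would be to write down the homotopy by hand via a prism/cone construction on the models, but acyclic models is cleaner and entirely adequate here.
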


For us, the key observation is that symmetrisation enforces normalisation
on cycles: 

\begin{lemma}[normalisation via symmetrisation]\label{lem:symnorm}
  \hfil
  \begin{enumerate}
  \item
    For all~$j \in \{0,\dots, n\}$, we have
    \[ \partial_j \circ \symm_n = (-1)^j \cdot \partial_0 \circ \symm_n. 
    \]
  \item
    In particular: If $c \in C_n(X;\R)$ is a cycle, then
    $\partial_j(\symm_n(c)) = 0$ for all~$j \in \{0,\dots,n\}$.
  \end{enumerate}
\end{lemma}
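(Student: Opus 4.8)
The plan is to deduce both parts from a single structural fact about the affine simplices $\Delta(\pi)$: the assignment $\Delta(\cdot)$ is functorial, i.e.\ $\Delta(\pi) \circ \Delta(\psi) = \Delta(\pi \circ \psi)$ for composable vertex maps $\psi$ and $\pi$ (both sides are affine and agree on vertices, hence coincide), together with the identification $\iota_j^n = \Delta(\delta_j)$, where $\delta_j \colon \{0,\dots,n-1\} \longrightarrow \{0,\dots,n\}$ is the order-preserving injection omitting the value~$j$. Granting these, part~(1) becomes a reindexing of the symmetrisation sum, and part~(2) a short consequence of the chain-map property recorded in Lemma~\ref{lem:symch}.

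For part~(1), I would first compute, for a singular simplex~$\sigma$,
\[ \partial_j(\symm_n(\sigma)) = \symm_n(\sigma) \circ \iota_j^n = \frac{1}{(n+1)!} \sum_{\pi \in \Sigma_{n+1}} \sgn(\pi) \cdot \sigma \circ \Delta(\pi \circ \delta_j), \]
using functoriality to collapse $\Delta(\pi) \circ \Delta(\delta_j)$ into $\Delta(\pi \circ \delta_j)$. The heart of the argument is to produce a single permutation $g_j \in \Sigma_{n+1}$ with $\delta_j = g_j \circ \delta_0$; solving this identity on vertices identifies $g_j$ as the $(j+1)$-cycle $(0\ j\ j{-}1\ \cdots\ 1)$, so that $\sgn(g_j) = (-1)^j$. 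Substituting $\delta_j = g_j \circ \delta_0$ and reindexing the sum via the bijection $\pi \longmapsto \pi \circ g_j$ of $\Sigma_{n+1}$ turns each $\pi \circ \delta_j$ into $\rho \circ \delta_0$ and multiplies every sign by $\sgn(g_j) = (-1)^j$. Comparing with the analogous expansion of $\partial_0 \circ \symm_n$ then yields exactly $\partial_j \circ \symm_n = (-1)^j \cdot \partial_0 \circ \symm_n$.

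For part~(2), I would invoke Lemma~\ref{lem:symch}: since $\symm_\bullet$ is a chain map and $c$ is a cycle, $\partial(\symm_n(c)) = \symm_{n-1}(\partial c) = 0$. Expanding the total boundary as $\partial = \sum_{j=0}^n (-1)^j \partial_j$ and feeding in part~(1) makes the two signs cancel, collapsing the alternating sum:
\[ 0 = \partial(\symm_n(c)) = \sum_{j=0}^n (-1)^j (-1)^j \cdot \partial_0(\symm_n(c)) = (n+1) \cdot \partial_0(\symm_n(c)). \]
As we work over~$\R$, this forces $\partial_0(\symm_n(c)) = 0$, and then part~(1) gives $\partial_j(\symm_n(c)) = 0$ for every~$j$. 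The only genuinely delicate point is the sign bookkeeping in part~(1), namely correctly pinning down $g_j$ and its sign $(-1)^j$; once that is in place, both statements are formal.
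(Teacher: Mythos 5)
Your proof is correct and takes essentially the same route as the paper: your permutation $g_j = (0\ j\ j{-}1\ \cdots\ 1)$ is exactly the paper's cyclic permutation $\tau_j$, your identity $\delta_j = g_j \circ \delta_0$ (combined with functoriality of $\Delta(\cdot)$) is precisely the paper's computation $\Delta(\pi)\circ\iota_j^n = \Delta(\pi\circ\tau_j)\circ\iota_0^n$, and the reindexing of the sum and the argument for part~(2) are identical. No changes needed.
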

\begin{proof}
  \emph{Ad~1.}
  Using the cyclic permutation $\tau_j \coloneqq (j\ j-1\ \dots\ 1\ 0) \in \Sigma_{n+1}$,
  we can re-write $\partial_j \circ \symm_n$ as follows:
  Each permutation~$\pi \in \Sigma_{n+1}$ satisfies
  \begin{align*}
      \Delta(\pi) \circ \iota_j^n
      & = \bigl[\pi(0),\dots,\pi(j-1), \pi(j+1), \dots, \pi(n)\bigr]
      \\
      & = \bigl[\pi\circ\tau_j(1),\dots,\pi\circ\tau_j(j), \pi\circ\tau_j(j+1), \dots, \pi \circ \tau_j(n)\bigr]
      \\
      & = \Delta(\pi \circ \tau_j) \circ \iota_0^n.
  \end{align*}
  Therefore, for all singular $n$-simplices~$\sigma$ on~$X$ we have 
  \begin{align*}
    \partial_j \circ \symm_n(\sigma) 
    &= \frac{1}{(n+1)!} \sum_{\pi \in \, \Sigma_{n+1}}  \sgn(\pi) \cdot
       \sigma \circ \Delta(\pi) \circ \iota_j^n
    \\
    &= (-1)^j \cdot \frac{1}{(n+1)!} \sum_{\pi \in \, \Sigma_{n+1}} \sgn(\pi\circ\tau_j) \cdot
       \sigma \circ \Delta(\pi) \circ \iota_j^n
    \\
    &= (-1)^j \cdot \frac{1}{(n+1)!} \sum_{\pi \in \, \Sigma_{n+1}} \sgn(\pi \circ \tau_j) \cdot
       \sigma \circ \Delta(\pi \circ \tau_j) \circ \iota_0^n
    \\
    &= (-1)^j \cdot \frac{1}{(n+1)!} \sum_{\eta \in \, \Sigma_{n+1}} \sgn(\eta) \cdot
       \sigma \circ \Delta(\eta) \circ \iota_0^n
    \\
    &= (-1)^j \cdot \partial_0 \circ \symm_n(\sigma).
  \end{align*}

  \emph{Ad~2.}
  As $\symm_\bullet$ is a chain map (Lemma~\ref{lem:symch}), 
  if $c \in C_n(X;\R)$ is a cycle, then $\symm_n(c)$ is a cycle, and
  in combination with the first part we see that 
  \begin{align*}
  0 &= \partial \bigl(\symm_n(c)\bigr) 
     = \sum_{j = 0}^n (-1)^j \cdot \partial_j\bigl(\symm_n(c)\bigr) 
     = \sum_{j = 0}^n (-1)^{2j} \cdot \partial_0\bigl(\symm_n(c)\bigr) \\
    &= (n+1) \cdot \partial_0\bigl(\symm_n(c)\bigr).
  \end{align*}
  Therefore, $\partial_0(\symm_n(c)) =0$. Applying the first part once
  more shows that $\partial_j(\symm_n(c)) = 0$ for all~$j \in
  \{0,\dots, n\}$. 
\end{proof}

\subsection{Proof of Proposition~\ref{mainprop}}

We already know that $\sv{\alpha}_1 \leq \nln{\alpha}$ for 
every $\alpha \in H_n(X;\R)$. Let us prove the opposite inequality.
Let $c \in C_n(X;\R)$ be a cycle representing~$\alpha \in H_n(X;\R)$. 
Then, we can consider $\symm_n(c) \in C_n(X;\R)$. By Lemma~\ref{lem:symch}
we know that $\symm_n(c)$ is homologous to $c$ and satisfies 
$\sv{\symm_n(c)}_1 \leq \sv{c}_1$. 
Moreover, Lemma~\ref{lem:symnorm} implies that $\symm_n(c)$ is normalised. 
This shows that 
\[
\nln{\alpha} \leq \sv{\symm_n(c)}_1 \leq \sv{c}_1.
\]
Taking the infimum over all cycles representing~$\alpha$ completes 
the proof.

 
\bibliographystyle{amsalphaabbrv}
\bibliography{svbib}

\end{document}